\numberwithin{equation}{section}
\newcommand{\U}{\mathbf{U}}
\newcommand{\D}{\mathbf{D}}
\newtheorem{prop}{Proposition}[section]
\newtheorem{thm}[prop]{Theorem}
\newtheorem{lem}[prop]{Lemma}
\newtheorem{defin}[prop]{Definition}
\newtheorem{cor}[prop]{Corollary}
\newtheorem{algo}[prop]{Algorithm}
\newtheorem{cla}[prop]{Claim}
\newtheorem{rem}[prop]{Remark}
\title{The Preisach graph and longest increasing subsequences}
\author{Patrik L.\ Ferrari\thanks{Institute for Applied Mathematics, Bonn University, Endenicher Allee 60, 53115 Bonn, Germany. E-mail: {\tt ferrari@uni-bonn.de}}\and Muhittin Mungan\thanks{Institute for Applied Mathematics, Bonn University, Endenicher Allee 60, 53115 Bonn, Germany. E-mail: {\tt mungan@iam.uni-bonn.de}}\and
M. Mert Terzi\thanks{LPTMS, CNRS, Universit\'e Paris-Saclay, 91405 Orsay, France. E-mail: {\tt mert.terzi@u-psud.fr}}
}
\date{}
\begin{document}
\maketitle

\sloppy
\begin{abstract}
The Preisach graph is a directed graph associated with a permutation $\rho\in{\cal S}_N$. We give an explicit bijection between its vertices and increasing subsequences of $\rho$ with the property that the length of a subsequence equals to the degree of nesting of the corresponding vertex inside a hierarchy of cycles and sub-cycles of the graph. As a consequence, the nesting degree of the Preisach graph equals the length of the longest increasing subsequence.
\end{abstract}

\section{Introduction and results}

\paragraph{Increasing subsequences in random permutations.}
We consider a permutation $\rho=(\rho_1,\ldots,\rho_N)\in{\cal S}_N$ of $\{1,\ldots,N\}$. The well-known Robinson-Schensted-Knuth correspondence~\cite{Sch61,Knu70} gives a bijection between $\rho$ and a \emph{pair of standard Young tableaux} $(P,Q)$, where the length of the first row of the tableaux equals the length  $\ell(\rho)$ of the longest increasing subsequence of $\rho$. The meaning of the sum of the length of the first $k$th rows in terms of increasing subsequences was unraveled in~\cite{Gre74}. Furthermore, if $\rho$ is taken uniformly distributed, the limiting law of $\ell(\rho)$ has been studied. The law of large numbers was determined in~\cite{LS77,VK77}, see also~\cite{AD95,Sep98,Jo98}.

Relations with random matrix theory have also been established. In~\cite{BDJ99} it was proven that the fluctuations of $\ell(\rho)$ are governed by the GUE Tracy-Widom distribution function~\cite{TW94}, see also the reviews~\cite{AD99,Cor18}. In the proof of the fluctuations, it is convenient to consider $N$ to be Poisson distributed. Then the shape of the Young tableaux is described by the poissonized Plancherel measure, which is a Schur measure arising naturally in other models, such as the Hammersley process~\cite{AD95} or a stochastic growth model of an interface~\cite{PS00,Jo01,BO04}.

In this note, we consider a completely different representation of a permutation: we represent $\rho$ as a directed graph, called the \emph{Preisach graph}, see~\cite{Ter20}. Its vertex set is a subset of $\{-1,1\}^N$, which can therefore be thought of as configurations of $N$ spins. Starting from the $(-1,\ldots,-1)$ spin configuration, each vertex of the graph can be obtained  by sequences of spin flips that follow the rules explained below. In particular, for any given vertex of the graph there is a minimal number of times one switches from $-1\to +1$ transitions to $+1\to -1$ to reach it.

We then analyze the structure of the graph and establish the correspondence to increasing subsequences. Our main result is the derivation of an \emph{explicit bijection} between the set of all increasing subsequences of $\rho$ and the vertex set of the Preisach graph, see Theorem~\ref{thmMain}. The bijection has an interesting geometric property: the minimal number of switches for a vertex is exactly equal to the length of the associated increasing subsequence. As a consequence, the length of the longest increasing subsequence is reflected by the nesting degree of the graph, see Corollary~\ref{CorMain}. Thus the geometric structure of the Preisach graph with its cycles nested inside cycles is directly related to the structure of the permutation $\rho$.

\begin{figure}
\begin{center}
\includegraphics[height=4cm]{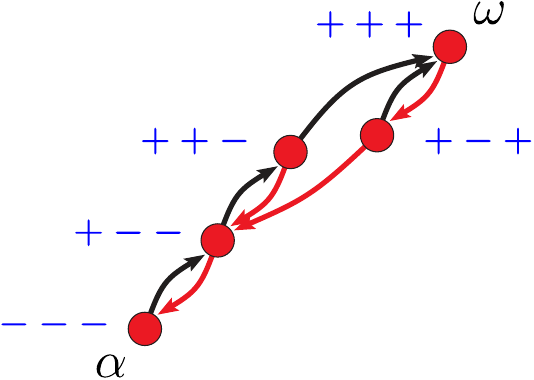}
\caption{The Preisach graph for the permutation $\rho=(2,3,1)$. Transitions under $\U$ and $\D$ are marked by black, respectively red edges. In this and further figures, we never indicate explicitly the transitions $\alpha=\D\alpha$ and $\omega=\U\omega$.}
\label{FigExample}
\end{center}
\end{figure}

\paragraph{The model.}
We start with the description of the Preisach graph $G=(V,E)$, which is a directed graph generated by a permutation $\rho\in{\cal S}_N$.
Figure~\ref{FigExample} depicts the Preisach graph for $N = 3$ and generated by the permutation $\rho = (2,3,1)$.
The vertex set $V$ consists of spin configurations $\sigma=(\sigma_1,\ldots,\sigma_N)\in \Omega = \{-1,1\}^N$. We denote by $\alpha=(-1,\ldots,-1)$ and $\omega=(1,\ldots,1)$ two spin configurations, which will belong to the graph $G$
for any permutation $\rho$.  For the construction of the edge set $E$ of the graph, we need to first define two maps $\textbf{U}$ and $\textbf{D}$ on $\Omega$.
Let $\rho=(\rho_1,\ldots,\rho_N)\in{\cal S}_N$ be a permutation of $\{1,\ldots,N\}$. Given a spin configuration $\sigma$, set
\begin{equation}
\begin{aligned}
i^+(\sigma)&=\min\{i: \sigma_i=-1\},\quad \textrm{for }\sigma\neq\omega,\\
i^-(\sigma)&=\rho_r,\quad r=\min\{s: \sigma_{\rho_s}=1,1\leq s\leq N\},\quad \textrm{for }\sigma\neq\alpha,
\label{eqn:ipmdef}
\end{aligned}
\end{equation}
and denote by $\sigma^i$ the configuration obtained from $\sigma$ by flipping the $i$th spin. We then define
\begin{equation}
\begin{aligned}
\U \sigma &= \sigma^{i^+(\sigma)}\textrm{ if }\sigma\neq\omega, \quad &\U \omega&=\omega,\\
\D \sigma&=\sigma^{i^-(\sigma)}\textrm{ if }\sigma\neq\alpha, \quad &\D \alpha&=\alpha.
\label{eqn:UDdef}
\end{aligned}
\end{equation}
\begin{defin}[Preisach Graph]
With these definitions, $\rho$ defines a directed graph $G=(V,E)$, the Preisach Graph, as follows. The vertex set $V$ consists of all elements $\sigma\in\Omega$ which, starting from $\alpha$, can be reached by a sequence of maps $\U$ and $\D$. Given $\sigma,\tilde\sigma\in V$, there is a directed edge from $\tilde\sigma$ to $\sigma$ if either $\sigma=\U \tilde\sigma$ or $\sigma=\D \tilde\sigma$. We also decompose the set $E=E_U\cup E_D$, where $E_U$ (resp.\ $E_D$) contains the edges generated by a $\U$ (resp.\ $\D$) transition.
\end{defin}

Note that by construction $\omega =\U^N \alpha$,  $\alpha = \D^N \omega $ and, more generally, for any configuration $\theta \in \Omega$, $\U^n\theta=\omega$ and $\D^n\theta=\alpha$ for $n$ large enough.
Observe that, in general, the vertex set $V$ of the Preisach Graph does not contain all elements of $\Omega$, see Figure~\ref{FigExample} for an illustration.

\begin{defin}[Nesting degree]
Define the degree of nesting ${\cal N}(\sigma)$ of $\sigma$ as the minimal number of alternating sequences of $\U $ and $\D $, that is,
\begin{equation}
{\cal N}(\sigma)=\min\{m: \sigma=(\D\textrm{ or }\U)^{n_m}\cdots\D^{n_4}\U^{n_3}\D^{n_2}\U^{n_1}\alpha\}.
\end{equation}
For the graph $G=(V,E)$, we define the maximal degree of nesting by
\begin{equation}
{\cal N}(G)=\max_{\sigma\in V}{\cal N}(\sigma).
\end{equation}
\end{defin}

\paragraph{The results.}
For a permutation $\rho=(\rho_1,\ldots,\rho_N)$, we say that a subsequence $\tilde\rho=(\rho_{i_1},\ldots,\rho_{i_n})$ is increasing, if $i_1<i_2<\ldots<i_n$ and $\rho_{i_1}<\rho_{i_2}<\ldots<\rho_{i_n}$. It has length $\ell(\tilde\rho)=n$. For convenience, we also introduce the trivial subsequence containing
no elements and denote it by $\emptyset$, so that $\ell(\emptyset) = 0$.
 It has been shown in~\cite{Ter20} that the number of increasing subsequences equals the number of vertices in the graph $G$.
\begin{thm}\label{thmMain}
Denote by $\cal I_\rho$ the set of increasing subsequences of $\rho$. There exists an \emph{explicit} bijection $\Phi:V\to {\cal I}_\rho$ with the property
\begin{equation}
\ell(\Phi(\sigma))={\cal N}(\sigma).
\end{equation}
The bijection is given in \eqref{eqBijection}.
\end{thm}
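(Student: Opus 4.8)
The plan is to pin down $\Phi$ by understanding how a vertex sits in the nested-cycle geometry of $G$, then check in turn that $\Phi(\sigma)$ is an increasing subsequence, that $\Phi$ is a bijection, and that $\ell(\Phi(\sigma))=\mathcal N(\sigma)$ --- with the last point essentially forced by the construction. The starting observation is a structural description of $V$: since $\U$ flips the lowest $-1$-spin, $\U^k\alpha$ is the configuration whose $+1$-set is $\{1,\dots,k\}$, and these $k=0,\dots,N$ configurations, and only these, have $\mathcal N\le 1$; they form the outer chain $\alpha\to\U\alpha\to\cdots\to\omega$. For general $\sigma\in V$ I would track how $\U$ and $\D$ act on the $+1$-set $S(\sigma)$ ($\U$ inserts the least missing label, $\D$ deletes the $\rho$-earliest present label) and prove that $\sigma$ possesses a \emph{unique} minimal alternating decomposition $\sigma=\cdots\D^{n_2}\U^{n_1}\alpha$ with exactly $m=\mathcal N(\sigma)$ blocks. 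Peeling the blocks off names $m$ entries of $\rho$, one per block: the first $\U$-run climbs to the largest named value, the next $\D$-run descends in $\rho$-order and turns at the next named value, and so on; read from the innermost block outward these give an $m$-term sequence, which is $\Phi(\sigma)$ (for $\sigma=\U^k\alpha$ it is the single entry of $\rho$ with value $k$, and $\Phi(\alpha)=\emptyset$). The same sequence is visible directly in the $\pm1$-pattern of $\sigma$ relative to $\rho$, which is the form recorded in \eqref{eqBijection} and which I would use for the verifications. Granting uniqueness of the minimal decomposition, $\ell(\Phi(\sigma))=\mathcal N(\sigma)$ is then immediate: one named entry per block, $\mathcal N(\sigma)$ blocks.

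For well-definedness I must show the $m$ named values are strictly increasing both in value and in $\rho$-position, so that $\Phi(\sigma)\in\mathcal I_\rho$. This monotonicity is exactly equivalent to reachability of $\sigma$ from $\alpha$ (an arbitrary $\pm1$-pattern violates it), so the claim reduces to the structural description of $V$: the mechanism is that a $\D$-run can only turn at a $\rho$-position later than the previous turn, while the $\U$-run after it can only climb to a value below the previous one. For bijectivity, since $|V|=|\mathcal I_\rho|$ is already known~\cite{Ter20}, it suffices to prove injectivity, and $\Phi$ is injective because the named sequence $\Phi(\sigma)$ together with $\rho$ reconstructs the block lengths $n_1,n_2,\dots$ (for instance $n_1$ is the largest named value), hence the decomposition, hence $\sigma$. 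Equivalently I would write $\Phi^{-1}$ down explicitly --- from values $v_1<\dots<v_n$ at positions $i_1<\dots<i_n$ apply $\U^{v_n}$, then $\D$ down to the turn at $v_{n-1}$, then $\U$ up to $v_{n-2}$, and so on --- note that the resulting configuration lies in $V$ automatically because it is reached from $\alpha$, and then check that applying the extraction rule to it returns $(v_1,\dots,v_n)$.

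The step I expect to be the real obstacle is showing the minimal decomposition is genuinely minimal, i.e., that the configuration produced by the $\Phi^{-1}$ recipe from an $n$-term subsequence has $\mathcal N(\sigma)=n$ and not less. The natural tool is a potential $\psi$ defined pattern-wise with $\psi(\alpha)=0$ and $\psi(\sigma)=\ell(\Phi(\sigma))$, together with the lemma that along any maximal $\U$-run, and along any maximal $\D$-run, $\psi$ increases by at most one overall --- it may dip and recover within a run, but cannot finish more than one above its value at the start of the run; iterating over the $m$ runs of any alternating decomposition of $\sigma$ yields $\psi(\sigma)\le m$, hence $\mathcal N(\sigma)\ge\ell(\Phi(\sigma))$. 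Proving that one-per-run bound is a concrete but delicate case analysis of how $\U$ and $\D$ reshape $S(\sigma)$ and where the named turns move; if it becomes unwieldy, the fallback is induction on $N$, peeling the outermost cycle of $G$ to expose Preisach graphs of shorter permutations (obtained by deleting entries of $\rho$) together with the increasing subsequences avoiding those entries, and transporting $\Phi$ and the identity $\ell=\mathcal N$ through the recursion.
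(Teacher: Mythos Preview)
Your outline is correct and lands on the same map $\Phi$ as the paper (switch-back labels along a minimal alternating word), but the technical route you propose as primary is different from the one the paper takes, while your ``fallback'' is essentially the paper's actual argument.

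The paper does not introduce a potential $\psi$ or prove a one-per-run increment bound. Instead, everything is driven by the recursive loop structure already set up in Section~2: the decomposition of the graph into $\pi_-$ and $\pi_+$ (Lemma~\ref{Lemmaproperties}) and the fact that sub-loops are isomorphic, \emph{with the same edge labels}, to Preisach graphs for shorter permutations. Concretely, the paper first proves a separate Proposition that the shortest path from $\alpha$ to $\sigma$ is unique, using that a path which splits off the $\U$-orbit too early or too late is forced (by Lemma~\ref{lem1} and the loop isomorphism) either to revisit a vertex or to admit a strictly shorter competitor. Once the shortest path is unique, $m=\mathcal N(\sigma)$ is immediate, and the increasing property of $(\ell_m,\ldots,\ell_1)$ is obtained by iterating: after the first switch-back at label $\ell_1=n_1$, one lands in the sub-loop $(\D^{k_1-1}\U^{n_1}\alpha,\U^{n_1}\alpha)$, which is isomorphic (with inherited labels, all $<n_1$) to a full Preisach graph for a shorter permutation, and one repeats. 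This is exactly the ``peel the outer cycle and recurse'' strategy you list as a fallback; in the paper it is packaged as repeated invocation of the loop isomorphisms rather than a formal induction on $N$.

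What each approach buys: the paper's route reuses structural lemmas already needed for other purposes and avoids any new case analysis; your potential-function route is self-contained and would give a direct lower bound $\mathcal N(\sigma)\ge\ell(\Phi(\sigma))$ without invoking uniqueness of shortest paths, but the run-by-run bound on $\psi$ is, as you anticipate, the delicate part and is strictly more work than the paper's argument. For bijectivity the paper exhibits $\Phi^{-1}$ explicitly (your recipe: $\U^{v_n}$, then $\D$ to the turn at $v_{n-1}$, etc.) and checks both directions; it notes in a remark that one could instead cite $|V|=|\mathcal I_\rho|$ from~\cite{Ter20} and prove only injectivity, which is the shortcut you take.
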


As a straightforward consequence we have the following corollary.
\begin{cor}\label{CorMain}
Given a permutation $\rho\in{\cal S}_N$, the maximal degree of nesting of $G=G(\rho)$ equals the length of the longest increasing subsequence of $\rho$.
\end{cor}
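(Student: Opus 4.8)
The plan is to deduce the statement immediately from Theorem~\ref{thmMain}. That theorem provides a bijection $\Phi\colon V\to{\cal I}_\rho$ with $\ell(\Phi(\sigma))={\cal N}(\sigma)$ for every $\sigma\in V$. Since $\Phi$ is surjective, as $\sigma$ ranges over $V$ the subsequence $\Phi(\sigma)$ ranges over all of ${\cal I}_\rho$, so I would simply take the maximum over $\sigma\in V$ of the identity $\ell(\Phi(\sigma))={\cal N}(\sigma)$ and read off
\[
{\cal N}(G)=\max_{\sigma\in V}{\cal N}(\sigma)=\max_{\sigma\in V}\ell(\Phi(\sigma))=\max_{\tilde\rho\in{\cal I}_\rho}\ell(\tilde\rho)=\ell(\rho),
\]
where the first equality is the definition of ${\cal N}(G)$, the third uses bijectivity of $\Phi$, and the last is the definition of the length of the longest increasing subsequence.

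The only point I would bother to spell out is that the maximum on the right is genuinely $\ell(\rho)$ and is not affected by the trivial empty subsequence $\emptyset\in{\cal I}_\rho$: since $\ell(\emptyset)=0$ while ${\cal I}_\rho$ contains every length-one subsequence $(\rho_i)$ once $N\ge 1$, one has $\max_{\tilde\rho\in{\cal I}_\rho}\ell(\tilde\rho)=\ell(\rho)$ for $N\ge 1$, and the case $N=0$ is vacuous. Surjectivity of $\Phi$ then guarantees that some vertex $\sigma\in V$ is mapped to a longest increasing subsequence, so the value ${\cal N}(G)$ is actually attained at that $\sigma$.

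I do not anticipate a real obstacle here: all the substantive content is already packaged in Theorem~\ref{thmMain}, namely the construction of $\Phi$ in \eqref{eqBijection}, the verification of the length--nesting identity, and the proof that $\Phi$ is a bijection. Granted those, the corollary is the elementary observation that a bijection transports the maximum of a function on its domain to the maximum of the transported function on its codomain.
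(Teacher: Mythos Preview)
Your proposal is correct and matches the paper's approach: the paper states the corollary as a ``straightforward consequence'' of Theorem~\ref{thmMain} without giving any further argument, and your maximization-over-the-bijection step is precisely that straightforward consequence. There is nothing to add or change.
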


\paragraph{Origin of the Preisach model and related models.}
 The set of states $\Omega$ along with the pair of maps $\U,\D: \Omega \to \Omega$ arises naturally in the description of the dynamics of athermal systems driven by a scalar
 parameter, such as a force applied uni-directionally, and where the response is assumed to be independent of the rate at which the driving parameter is changing in time~\cite{MW19, MT18,MSDR19}.

 In the physics literature this type of dynamical regime is called athermal quasi-static (AQS)~\cite{ML06} and it has been used to numerically model yielding phenomena in diverse materials such as crystals~\cite{ST11,NI19}
 and  amorphous solids~\cite{BN17,RV15,LPS17}.
 Here $\Omega$ denotes the set of quasi-static states, while the
 maps $\U$ and $\D$ describe transitions between these states when the driving parameter is increased, or respectively decreased, just sufficiently enough to trigger such a transition. Moreover, these
 maps are assumed to be acyclic. This implies that the driving is such that, under monotonous increase (decrease) of the driving parameter, a previously visited state cannot be revisited (except for a subset of
 $\D$- and $\U$-absorbing states $\alpha_i = \D \alpha_i$ and $\omega_j = \U \omega_j$). Such systems have been called {\em AQS-automata} and it has been shown that their dynamics can be described in terms of a corresponding graph, the AQS graph~\cite{MT18}.

The Preisach Graph constructed above is an example of such an AQS graph and furnishes  a representation of the dynamics of the
 Preisach model~\cite{Pre35, May86b},
 as shown in~\cite{Ter20}, see also~\cite{Ra16}. The Preisach model has been used to describe a broad range of systems exhibiting hysteresis, including magnetic materials, where the model originated~\cite{Pre35, BEHS83},
 but also fracture in dilatant rocks~\cite{Hol81},
 and more recently, memory formation in matter~\cite{KNPSZ19, HKKW20}.
 A comprehensive review of the Preisach model and its applications can be found in~\cite{BM06,BS12}.
At the same time, the Preisach model is the simplest system exhibiting return-point-memory (RPM)~\cite{BEHS83, DKKRSS93},
a property wherein a system remembers the states at which the direction of an external driving was reversed. As shown in~\cite{MT18}, the presence of RPM imposes strong constraints on the structure of the associated AQS graph. The corresponding graph property has been called the {\em RPM loop-property}  ($\ell$RPM) and will be defined in the next section.

\paragraph{Acknowledgments.} The work of P.L. Ferrari and M. Mungan was partly funded by the Deutsche Forschungsgemeinschaft (DFG, German Research Foundation) under Germany’s Excellence Strategy - GZ 2047/1, projekt-id 390685813 and by the Deutsche Forschungsgemeinschaft (DFG, German Research Foundation) - Projektnummer 211504053 - SFB 1060.
M. Mungan is also funded by the Deutsche Forschungsgemeinschaft (DFG, German Research Foundation) - Projektnummer 398962893.

\section{The Preisach graph}
In this section we derive the relevant properties of the Preisach graph, including $\ell$RPM. We will then show in Section \ref{sectAlgorithm} how these features allow us to construct the Preisach Graph in an iterative manner.

\subsection{The $\ell$RPM property}
One key property used in establishing our main  result is the \emph{loop return-point memory} property ($\ell$RPM)
~\cite{MT18} which the Preisach Graph possesses, and which we define next. Note that the definitions to follow apply to any AQS graph\footnote{Note the slight change in terminology with respect to
 ~\cite{MT18}. In~\cite{MT18} a {\em loop} also refered to what we here call a {\em cycle}. For reasons of clarity, we have chosen to distinguish these here. }.

\newpage
\begin{defin}[Loops and return point memory]\label{def:general}
$ $
\begin{itemize}
\item [(a)] Consider an ordered pair of states $(\mu,\nu)\in \Omega\times \Omega$ satisfying $\nu=\U^n \mu$ for some $n$ and $\mu=\D^m \nu$ for some $m$.
We call the set of vertices of the form $\U^k\mu$, $0\leq k\leq n$ and $\D^j\nu$, $0\leq j\leq m$ the $\U$-, respectively, $\D$-boundary vertices of $(\mu,\nu)$. We then
say that $(\mu,\nu)$ forms a \emph{$\U\D$-cycle}, or simply \emph{cycle}, in the sense that the union of the $\U$- and $\D$-boundaries forms a cycle in the graph and refer to $\mu$ and $\nu$ as the lower and upper endpoint of this cycle.
\item [(b)] For a vertex $\mu$, we call \emph{$\U$-orbit of $\mu$} the sequence of points $\mu,\U\mu,\U^2\mu,\ldots$ and \emph{$\D$-orbit of $\mu$} the sequence of points $\mu,\D\mu,\D^2\mu,\ldots$
\item [(c)] Given a cycle $(\mu,\nu)$, if for each state $u$ on the $\U$-boundary its $\D$-orbit contains $\mu$ and likewise, for each state $v$ on the $\D$-boundary its $\U$-orbit contains $\nu$, then we say that the cycle has the \emph{absorption property}.
This implies in particular that for these states, $(\mu,u)$  and  $(v,\nu)$ each form cycles. Assuming that the cycle $(\mu,\nu)$ possesses the absorption property, we refer to the cycles $(\mu,u)$, $u\in \U$-boundary and $(v,\nu)$, $v\in\D$-boundary as its major sub-cycles.
\item [(d)] Given a cycle $(\mu,\nu)$, we say that it possesses the \emph{loop return-point memory} property  ($\ell$RPM), if it is absorbing and every major sub-cycle has the $\ell$RPM property.
\item [(e)] Consider a graph such that each cycle satisfies the absorption property and let $(\mu,\nu)$ be a cycle. We can associate with the cycle $(\mu,\nu)$ a subgraph, called the \emph{loop $(\mu,\nu)$}, whose
vertex set is given by the following iterative union of sets:
the set of all boundary states of major sub-cycles of $(\mu,\nu)$, the boundary states of the major sub-cycles of each of the major sub-cycles of $(\mu,\nu)$, and so on. The edge set of the loop $(\mu,\nu)$ is inherited from the graph.
\end{itemize}

\end{defin}

We start by establishing the $\ell$RPM property for the cycle  $(\alpha,\omega)$.
\begin{lem}\label{preisachabs}
 Let $\Omega = \{ -1,1\}^N$, $\rho \in \mathcal{S}_N$ and let the pair of maps $\U,\D: \Omega \to \Omega$ be defined as in \eqref{eqn:ipmdef} and \eqref{eqn:UDdef}. Then, given any cycle  $(\mu,\nu) \in \Omega \times \Omega$, $(\mu,\nu)$ has the absorption property, as defined in Definition~\ref{def:general}(c). It follows that $(\mu,\nu)$ has the $\ell$RPM property.
\end{lem}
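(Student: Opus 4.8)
The plan is to exploit the monotonicity of $\U$ and $\D$ to determine the structure of an arbitrary cycle, and then to read the absorption property off directly. Fix a cycle $(\mu,\nu)$, so that $\nu=\U^n\mu$ and $\mu=\D^m\nu$ for some $n,m$. Since $\U$ only flips spins from $-1$ to $+1$ one has $\sigma\le\U\sigma$ in the coordinatewise order on $\Omega$ (with $-1<+1$), hence $\mu\le\U\mu\le\cdots\le\U^n\mu=\nu$ and in particular $\mu\le\nu$. I would then write $A=\{i:\mu_i=-1,\nu_i=+1\}$, $B=\{i:\mu_i=\nu_i=-1\}$, $C=\{i:\mu_i=\nu_i=+1\}$ for the three coordinate patterns that can occur along the cycle.

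The first real step is to establish two ordering facts about the set $A$. Looking at the $\U$-orbit $\mu,\U\mu,\ldots,\nu$: each application of $\U$ flips one $-1$ to $+1$, a spin that has been flipped never returns to $-1$ along the orbit, and $\nu$ still carries $-1$ on all of $B$; therefore every flip occurring before $\nu$ is reached happens at a coordinate of $A$, and these coordinates are visited in increasing index order. Moreover, at each such step $\U$ flips the smallest-index $-1$ spin, and that spin belongs to $A$ while every coordinate of $B$ is still $-1$; hence every coordinate of $A$ has strictly smaller index than every coordinate of $B$. Running the mirror argument along the $\D$-orbit $\nu,\D\nu,\ldots,\mu$ — using that $\D$ only turns $+1$ into $-1$, that $\mu$ still carries $+1$ on all of $C$, and that $\D$ flips the $\rho$-earliest $+1$ spin — shows that the coordinates of $A$ are flipped in increasing $\rho$-order and that every coordinate of $A$ precedes every coordinate of $C$ in $\rho$-order.

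Given these two facts the absorption property is essentially a computation. For a $\U$-boundary vertex $u=\U^k\mu$, the description above says that $u$ carries $+1$ exactly on $C$ together with the first $k$ coordinates of $A$ in index order. Applying $\D$ repeatedly to $u$, at each step the $\rho$-earliest $+1$ spin lies in $A$ — because all of $A$ precedes $C$ in $\rho$-order — as long as such a spin is left; once those (at most $k$) spins have been turned to $-1$, the configuration carries $+1$ on $C$ and $-1$ on $A\cup B$, i.e.\ it equals $\mu$. So the $\D$-orbit of $u$ contains $\mu$, and in particular $(\mu,u)$ is itself a cycle. The mirror computation gives $\U^{j}(\D^{j}\nu)=\nu$ for any $\D$-boundary vertex $v=\D^j\nu$, so the $\U$-orbit of $v$ contains $\nu$. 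Thus every cycle in $\Omega$ has the absorption property. Finally $\ell$RPM follows by induction on the Hamming distance $|A|$ between the endpoints: the trivial cycle ($|A|=0$) vacuously has $\ell$RPM, each proper major sub-cycle $(\mu,\U^k\mu)$ or $(\D^j\nu,\nu)$ has strictly smaller $|A|$, is itself a cycle, and hence is absorbing, so by Definition~\ref{def:general}(d) it has $\ell$RPM — and therefore so does $(\mu,\nu)$.

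I expect the two ordering facts about $A$ to be the heart of the matter; everything else is bookkeeping. Their content is that the coordinates on which $\mu$ and $\nu$ differ form, simultaneously, a prefix in the index order (all of $A$ before $B$) and a prefix in the $\rho$-order (all of $A$ before $C$), and this is precisely the condition under which the greedy maps $\U$ and $\D$ become mutually invertible along the boundaries of the cycle.
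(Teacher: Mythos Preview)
Your proof is correct and follows essentially the same argument as the paper's. Both hinge on the observation that the set $A$ of coordinates on which $\mu$ and $\nu$ differ precedes, in $\rho$-order, the set $C$ of coordinates where both carry $+1$ (this is exactly the paper's condition \eqref{eqn:NP2}); your packaging via the partition $A\cup B\cup C$ and the two ordering facts is slightly cleaner, and your explicit induction on $|A|$ for the $\ell$RPM conclusion spells out what the paper leaves as an immediate consequence.
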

It follows in particular that the cycle $(\alpha,\omega)$ has the $\ell$RPM property.
\begin{proof}
 Since $(\alpha,\omega)$ is a cycle, we know that given any $\rho$ there exists at least one pair $(\mu,\nu)$ forming a cycle. Let $(\mu,\nu)$ be a cycle. From the definitions of the maps \eqref{eqn:ipmdef} and
 \eqref{eqn:UDdef}, it follows that $\nu = \U^n \mu$ and $\mu = \D^n \nu$ for some $n$. For $p = 1, \ldots, n$, denote by $i_p = i^+(\U^p\mu)$ the indices of the spins that change their state from $-1 \to 1$ as we move along the $\U$-boundary of the cycle. Since $(\mu,\nu)$ forms a cycle, the same set of spins must revert their states from $1 \to -1$ as we move from $\nu$ to $\mu$ along its $\D$-boundary.
 Denote therefore by
 \begin{equation}
 r_1 < r_2 < \ldots < r_n
 \label{indexorder}
 \end{equation}
 the indices of the permutation $\rho$ so that
\begin{equation}
  \{i_1, \ldots, i_n\} = \{ \rho_{r_1}, \ldots, \rho_{r_n} \}.
\end{equation}
Moreover, by choice of ordering of the indices in \eqref{indexorder}, it follows that
\begin{equation}
 \{i_1, \ldots, i_n\} \subset \{ \rho_1, \rho_2, \ldots, \rho_{r_n} \}.
 \label{eqn:NP1}
\end{equation}
Next, observe that, since the pair $(\mu,\nu)$ forms a cycle, it must be that
\begin{equation}
 \nu_j = -1, \quad \forall j \in \{\rho_1, \rho_2, \ldots, \rho_{r_n} \} \setminus \{ i_1, i_2, \ldots, i_n \},
 \label{eqn:NP2}
\end{equation}
since otherwise some of these sites would flip their spins to $-1$ before the sites $i_1, \ldots, i_n$ and there would be no $\D$-orbit from $\nu$ leading to $\mu$. This in turn implies
\begin{equation}
\mu_{j} = -1, \quad \forall j \in \{ \rho_1, \rho_2, \ldots, \rho_{r_n} \}.
 \label{eqn:NP3}
\end{equation}

Take now  any $\U$-boundary state $\U^k \mu$ and consider its $\D$-orbit. We claim that
\begin{equation}
 \{ i_1, i_2, \ldots, i_k \} = \{ i^-(\U^k \mu), i^-(\D\U^k \mu), \ldots, i^-(\D^{k-1}\U^k \mu) \},
\end{equation}
from which it then follows that $\mu = \D^k \U^k \mu$.

To prove the claim, note first that $i_1, i_2, \ldots, i_k$ are the sites that flip their spin to $+1$ as we move to $\U^k \mu$ along the $\U$-boundary of $(\mu,\nu)$. From \eqref{eqn:NP1}
and
\eqref{eqn:NP3} it follows that $i_1 < i_2 < \ldots < i_k$ are the only sites in $ \{\rho_1, \rho_2, \ldots, \rho_{r_n} \} $ whose spin is now in state $1$. Hence the spins of these sites must be flipped to $-1$ first and the claim follows.

The proof that the $\U$-orbit of any state $v$ on the $\D$-boundary leads to $\nu$ follows a similar line of reasoning and we omit it. Alternatively, it follows from the above, using the $\U$-$\D$ symmetry mentioned in Remark \ref{rem:UDsym}.
\end{proof}

\subsection{Properties of the Preisach graph}
First we want to see how the Preisach graph for $N$ spins is related to the Preisach graph with $N-1$ spin. For this purpose, consider a permutation $\rho\in{\cal S}_N$.

\begin{lem}\label{lem1}
Let us denote by $k$ the number such that $\rho_k=N$. Then we have the property
\begin{equation}
\D^{k-1}\U^{N-1}\alpha = \D^k \U^N\alpha = \D^k \omega,
\end{equation}
see Figure~\ref{Fig1}(a) for an illustration.
\end{lem}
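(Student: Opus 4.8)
The claim is purely about the dynamics of the maps $\U$ and $\D$, so the plan is to unwind the definitions \eqref{eqn:ipmdef}–\eqref{eqn:UDdef} and track exactly which spins are flipped. First I would describe the configuration $\omega=\U^N\alpha$: starting from $\alpha=(-1,\ldots,-1)$, each application of $\U$ flips the leftmost $-1$ to $+1$, so after $N$ steps all spins are $+1$, giving $\omega$. The point of the lemma is that applying $\D$ starting from $\omega$ flips spins back to $-1$ in the order $\rho_1,\rho_2,\ldots$, i.e.\ $\D^s\omega$ has exactly the spins $\rho_{s+1},\ldots,\rho_N$ equal to $+1$. Indeed, $i^-(\sigma)$ picks out $\rho_r$ with $r$ minimal such that $\sigma_{\rho_r}=1$; when we start at $\omega$ all $\sigma_{\rho_r}=1$, so $r=1$ and the first spin to be reset is $\rho_1$, then $\rho_2$, and so on. In particular, $\D^{k}\omega$ is the configuration with $\sigma_{\rho_j}=-1$ for $j\le k$ and $\sigma_{\rho_j}=+1$ for $j>k$; since $\rho_k=N$, this configuration has $\sigma_N=-1$ and, among the first few coordinates, a specific pattern determined by $\rho$.

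Next I would compute $\U^{N-1}\alpha$. Again each $\U$ flips the leftmost $-1$; after $N-1$ steps the spins $1,\ldots,N-1$ are $+1$ and spin $N$ is still $-1$. Call this configuration $\tau$. Now I apply $\D$ repeatedly to $\tau$. Since $\sigma_N=-1$ in $\tau$, the index $\rho_k=N$ is skipped by $i^-$, which instead selects the smallest $\rho_r$ with $\sigma_{\rho_r}=1$; because all of $\rho_1,\ldots,\rho_{k-1}$ are indices in $\{1,\ldots,N-1\}$ and hence carry spin $+1$ in $\tau$, the first $k-1$ applications of $\D$ reset exactly $\rho_1,\ldots,\rho_{k-1}$ — the same spins, in the same order, as the first $k-1$ applications of $\D$ to $\omega$. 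Therefore $\D^{k-1}\tau$ and $\D^{k-1}\omega=\D^k\omega$ agree on every coordinate: both have $\sigma_{\rho_j}=-1$ for $j\le k-1$, both have $\sigma_N=-1$ (in the first case because it was never flipped up, in the second because $\D^k$ reset $\rho_k=N$), and both have $\sigma_{\rho_j}=+1$ for $j>k$. This gives $\D^{k-1}\U^{N-1}\alpha=\D^k\U^N\alpha$, and $\D^k\U^N\alpha=\D^k\omega$ is immediate from $\omega=\U^N\alpha$.

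The only subtlety — and the step I would be most careful about — is verifying that the sequence of spins reset by $\D$ is genuinely $\rho_1,\rho_2,\ldots$ (skipping $\rho_k=N$ in the $\tau$ case), i.e.\ that once a spin $\rho_j$ has been reset to $-1$ it is not re-selected, and that no later index $\rho_m$ with $m>k$ sneaks in before $\rho_{k-1}$. This follows because $\D$ only ever flips $+1\to-1$, so the set of $+1$-spins is monotone decreasing along a $\D$-orbit, and $i^-$ always picks the $\rho_r$ of smallest rank $r$ still carrying $+1$; an easy induction on the number of $\D$-steps then shows that after $s$ steps (with $s\le k-1$) precisely $\rho_1,\ldots,\rho_s$ have been reset. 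Alternatively, one can cite Lemma~\ref{preisachabs}: $(\alpha,\omega)$ is a cycle with the absorption property, the $\U$-boundary state $\tau=\U^{N-1}\alpha$ then satisfies $\alpha=\D^{N-1}\tau$, and tracking which spins it resets reproduces the order $\rho_1,\ldots,\rho_{N-1}$ as above, from which the displayed identity follows by stopping the $\D$-orbit after $k-1$ steps. Either way the argument is a short bookkeeping of spin flips.
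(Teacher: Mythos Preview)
Your argument is correct and follows the same route as the paper's proof: both compute $\U^{N-1}\alpha=(1,\ldots,1,-1)$ and $\omega=\U^N\alpha$ explicitly and then track which spins successive applications of $\D$ reset, observing that in both cases one ends up with $\sigma_{\rho_j}=-1$ for $j\le k$ and $\sigma_{\rho_j}=+1$ for $j>k$. One slip to fix in your write-up: the phrase ``$\D^{k-1}\tau$ and $\D^{k-1}\omega=\D^k\omega$ agree'' is wrong as written, since $\D^{k-1}\omega\neq\D^k\omega$; what you mean (and what your coordinate check actually shows) is that $\D^{k-1}\tau=\D^{k}\omega$.
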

\begin{figure}
\centering
\begin{subfigure}[b]{.4\textwidth}
\centering
\includegraphics[height=3.5cm]{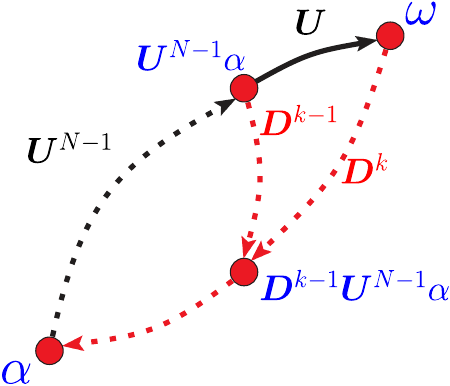}
\caption{}
\end{subfigure}~
\begin{subfigure}[b]{.4\textwidth}
\centering
\includegraphics[height=4.5cm]{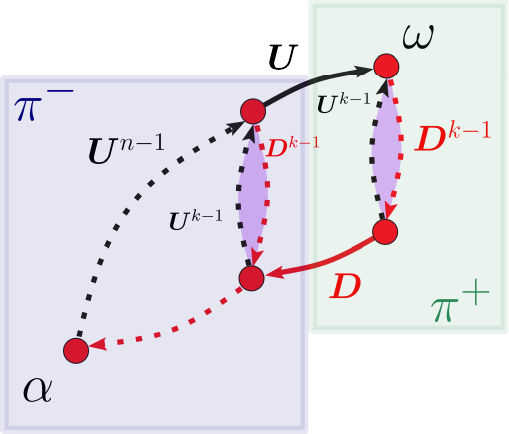}
\caption{}
\end{subfigure}
\caption{(a) Graphical illustration of Lemma~\ref{lem1}. (b) Decomposition used in the forward construction of the Preisach graph.}
\label{Fig1}
\end{figure}
\begin{proof}
The state $\U^{N-1}\alpha$ has spin configuration $(1,\ldots,1,-1)$, and applying one more time $\U$, we get $\omega$, i.e., $\U^N\alpha=\omega$. The state $\D^{k-1} \U^{N-1}\alpha$ is obtained from $\U^{N-1}\alpha$ by flipping all the $+1$ spins at sites $j<\rho_k$ back to $-1$, while the state $\D^k\U^N\omega$ is obtained from $\omega$ by flipping all the $+1$ spins at sites $j\leq \rho_k=N$ back to $-1$. Thus these two states are the same.
\end{proof}
\begin{rem}\label{rem:UDsym}
The Preisach graph has an obvious $\U-\D$ symmetry. Indeed, the Preisach graph obtained by replacing every $\U$-transition by a $\D$-transition and vice versa is the Preisach graph generated by the inverse permutation.
\end{rem}
Using this fact one readily obtains the following result.
\begin{lem}\label{lem2}
Let  $\rho_N = q$. Then we have the property
\begin{equation}
\U^{q-1} \D^{N-1}\omega=\U^q \D^N \omega=\U^q \alpha.
\end{equation}
\end{lem}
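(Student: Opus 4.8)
The plan is to obtain Lemma~\ref{lem2} from Lemma~\ref{lem1} by applying the $\U$--$\D$ symmetry of Remark~\ref{rem:UDsym} to the inverse permutation $\rho^{-1}$, so that essentially no new computation is needed.

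First I would make Remark~\ref{rem:UDsym} precise at the level of vertices: it provides an isomorphism of directed graphs $G(\rho^{-1})\to G(\rho)$ that turns every $\U$-edge into a $\D$-edge and every $\D$-edge into a $\U$-edge. Such an isomorphism must send the unique $\U$-absorbing vertex of $G(\rho^{-1})$, namely $\omega=(1,\dots,1)$, to the unique $\D$-absorbing vertex of $G(\rho)$, namely $\alpha=(-1,\dots,-1)$, and conversely. Hence it interchanges $\alpha\leftrightarrow\omega$ and maps a configuration written in $G(\rho^{-1})$ as $\D^{a}\U^{b}\cdots\alpha$ to the configuration written in $G(\rho)$ as $\U^{a}\D^{b}\cdots\omega$.

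Next I would apply Lemma~\ref{lem1} to $\rho^{-1}$. If $\rho_N=q$, then the position at which $\rho^{-1}$ attains the value $N$ is the index $k$ with $(\rho^{-1})_k=N$, i.e.\ $\rho_N=k$, so $k=q$. Lemma~\ref{lem1}, read inside $G(\rho^{-1})$, therefore gives $\D^{q-1}\U^{N-1}\alpha=\D^{q}\U^{N}\alpha=\D^{q}\omega$ for the maps of $G(\rho^{-1})$. Transporting this identity along the isomorphism above (which swaps $\U\leftrightarrow\D$ and $\alpha\leftrightarrow\omega$) yields
\begin{equation}
\U^{q-1}\D^{N-1}\omega=\U^{q}\D^{N}\omega=\U^{q}\alpha,
\end{equation}
which is exactly the assertion.

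The one delicate point is the vertex-level formulation of Remark~\ref{rem:UDsym}, in particular the fact that the $\U$--$\D$ swap forces $\alpha\leftrightarrow\omega$; this is automatic once one knows that the relabelled graph is again a Preisach graph, since its absorbing vertices are determined by the graph structure. As a cross-check that does not use the symmetry, one can verify the identity directly: $\D^{N-1}\omega$ is the configuration with $-1$ at the sites $\rho_1,\dots,\rho_{N-1}$ and $+1$ only at site $q=\rho_N$, so $\U^{q-1}\D^{N-1}\omega$ has $+1$ exactly at the sites $1,\dots,q$; on the other hand $\D^{N}\omega=\alpha$, the configuration $\U^{q}\alpha$ again has $+1$ exactly at $1,\dots,q$, and $\U^{q}\D^{N}\omega=\U^{q}\alpha$ is immediate.
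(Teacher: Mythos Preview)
Your proposal is correct and follows essentially the same approach as the paper: the paper derives Lemma~\ref{lem2} directly from Remark~\ref{rem:UDsym} (the $\U$--$\D$ symmetry applied to $\rho^{-1}$), and you have spelled out precisely this argument, including the identification $k=q$ for the inverse permutation. Your additional direct verification at the level of spin configurations is a nice sanity check but not needed.
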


By the $\ell$RPM property and with $k$ such that $\rho_k = N$, each of the pairs
\begin{equation}
(\alpha,\U^{N-1}\alpha),\quad (\D^{k-1}U^{N-1}\alpha,\U^{N-1}\alpha),\quad (\D^{k-1}\omega,\omega)
\end{equation}
form a loop with the following properties.
\begin{lem}\label{Lemmaproperties}
Consider the Preisach graph generated by the permutation $\rho=(\rho_1,\ldots,\rho_N)$, and let $k$ be such that $\rho_k = N$. Then the following hold.\\
(a) The loop $(\alpha,\U^{N-1}\alpha)$ projected onto the first $N-1$ spins\footnote{With this we mean that every elements $(\sigma_1,\ldots,\sigma_{N-1},\sigma_N)$ is projected onto $(\sigma_1,\ldots,\sigma_{N-1})$.}  is the Preisach graph of $N-1$ spins for the permutation $\rho_-$ obtained from $\rho$ by removing $\rho_k=N$:
\begin{equation}
\rho_-=(\rho_1,\ldots,\rho_{k-1},\rho_{k+1},\ldots,\rho_N).
\end{equation}
(b) The loops $(\D^{k-1}\U^{N-1}\alpha,\U^{N-1}\alpha)$ and $(\D^{k-1}\omega,\omega)$ are isomophic and disjoint. Furthermore, they are in bijection with a system of $k-1$
spins and permutation $\rho_+$ obtained by keeping the first $k-1$ elements of $\rho$:
\begin{equation}
\rho_+=(\rho_1,\ldots,\rho_{k-1}).
\end{equation}
(c) The loops $(\alpha, \U^{N-1}\alpha)$ and $(\D^{k-1}\omega,\omega)$ are disjoint. The full graph is obtained by the union of these two loops joined by two directed edges: the one from $\U^{N-1}\alpha$ to $\U^N\alpha=\omega$ and the one from $\D^{k-1}\omega$ to $\D^k\omega=\D^{k-1}\U^{N-1}\alpha$.\\
(d) The Preisach graph is planar.
\end{lem}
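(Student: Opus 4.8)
The plan is to prove (a)--(d) simultaneously by induction on $N$; the key point is that in each of the two loops the last spin is frozen. One first checks that the three pairs really are cycles --- hence loops, by Lemma~\ref{preisachabs}: $(\alpha,\U^{N-1}\alpha)$ because $\U^{N-1}\alpha=(1,\dots,1,-1)$ and $\alpha=\D^{N-1}\U^{N-1}\alpha$, while for $(\D^{k-1}\U^{N-1}\alpha,\U^{N-1}\alpha)$ and $(\D^{k-1}\omega,\omega)$ one uses Lemmas~\ref{lem1},~\ref{lem2} to see that their lower endpoints $\D^{k-1}\U^{N-1}\alpha=\D^{k}\omega$ and $\D^{k-1}\omega$ lie on the $\D$-boundaries of $(\alpha,\U^{N-1}\alpha)$, resp.\ $(\alpha,\omega)$, so that absorption supplies the return $\U$-orbit. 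The core lemma is then the \emph{freezing} statement: every vertex of the loop $(\alpha,\U^{N-1}\alpha)$ has $\sigma_N=-1$; every vertex of the loop $(\D^{k-1}\omega,\omega)$ has $\sigma_N=+1$, and in fact $\sigma_j=+1$ there for all $j\notin\{\rho_1,\dots,\rho_{k-1}\}$. I would prove this by induction on the nesting depth of Definition~\ref{def:general}(e): the endpoints have the stated values; a $\U$-orbit never turns $\sigma_N$ from $+1$ into $-1$ and a $\D$-orbit never the reverse; and the only site at which a $\U$-transition flips $\sigma_N$ up is $(1,\dots,1,-1)=\U^{N-1}\alpha$ (resp.\ the only site at which a $\D$-transition flips $\sigma_N$ down is $\D^{k-1}\omega$), which by Lemmas~\ref{lem1},~\ref{lem2} is exactly an endpoint where that transition \emph{leaves} the loop. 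Hence the $\U$- and $\D$-boundaries of each nested major sub-cycle inherit the correct values, closing the induction.

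For (c): freezing shows the loops $(\alpha,\U^{N-1}\alpha)\subseteq\{\sigma_N=-1\}$ and $(\D^{k-1}\omega,\omega)\subseteq\{\sigma_N=+1\}$ are disjoint. Let $\Gamma$ be their union together with the two directed edges $\U^{N-1}\alpha\to\U^{N}\alpha=\omega$ and $\D^{k-1}\omega\to\D^{k}\omega=\D^{k-1}\U^{N-1}\alpha$ (the last equalities are Lemmas~\ref{lem1},~\ref{lem2}). A case analysis over the nesting structure shows that a loop is closed under $\U$ away from its upper endpoint and under $\D$ away from its lower endpoint, and --- by freezing and Lemmas~\ref{lem1},~\ref{lem2} --- that the sole exit of $(\alpha,\U^{N-1}\alpha)$ (a $\U$-transition at $\U^{N-1}\alpha$) and the sole exit of $(\D^{k-1}\omega,\omega)$ (a $\D$-transition at $\D^{k-1}\omega$) are exactly these two edges. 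Hence $\Gamma$ is closed under $\U$ and $\D$ and contains $\alpha$, so $V\subseteq\Gamma$; the reverse inclusion is clear, so $V=\Gamma$. In particular the two loops equal $\{\sigma\in V:\sigma_N=-1\}$ and $\{\sigma\in V:\sigma_N=+1\}$, and $E$ is the union of the edges internal to them and the two connecting edges.

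For (a) and (b): since $\sigma_N\equiv-1$ on the loop $(\alpha,\U^{N-1}\alpha)$, the projection $\pi$ deleting the last coordinate is injective there and intertwines $\U$ with the $\U$-map of $\rho_-$ (because $i^+$ depends only on the spin configuration) and $\D$ with the $\D$-map of $\rho_-$ (because $\sigma_{\rho_k}=\sigma_N=-1$, so the index $\rho_k=N$ is never picked by $i^-$, and removing it from $\rho$ changes nothing); the only mismatch is at $\U^{N-1}\alpha$, where the exiting transition corresponds on the $(N-1)$-spin side to the absorbing self-transition of $(1,\dots,1)$. Since $\pi(\alpha)=(-1,\dots,-1)$ and a loop is generated from its lower endpoint by $\U,\D$ inside itself (Definition~\ref{def:general}(e)), the image of $\pi$ is exactly the Preisach graph of $\rho_-$; this is (a). For (b), the involution flipping $\sigma_N$ sends the loop $(\D^{k-1}\omega,\omega)$ bijectively onto the loop $(\D^{k-1}\U^{N-1}\alpha,\U^{N-1}\alpha)$, matching lower and upper endpoints (Lemmas~\ref{lem1},~\ref{lem2}) and intertwining $\U,\D$ on the induced subgraphs by the same case check (valid away from the endpoints, where the excess transitions leave the respective loops); disjointness is once more the $\sigma_N$-dichotomy. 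Finally, on $(\D^{k-1}\omega,\omega)$ all spins outside $\{\rho_1,\dots,\rho_{k-1}\}$ equal $+1$, so projecting onto the coordinates $\{\rho_1,\dots,\rho_{k-1}\}$ and relabelling them in increasing order (that is, passing to the order pattern of $\rho_+$) gives, as in (a), an isomorphism onto the Preisach graph of $\rho_+$ sending the lower endpoint $\D^{k-1}\omega$ to $(-1,\dots,-1)$.

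For (d): induct on $N$ with the strengthened hypothesis that the Preisach graph has a planar embedding in which all vertices on the $\U$- and $\D$-boundaries of $(\alpha,\omega)$ lie on the outer face; the cases $N\le 1$ are immediate. In the step, under the isomorphisms of (a),(b) the cycle $(\alpha,\U^{N-1}\alpha)$ of the first loop corresponds to the cycle $(\alpha,\omega)$ of the Preisach graph of $\rho_-$ and the cycle $(\D^{k-1}\omega,\omega)$ of the second loop to that of $\rho_+$, so the hypothesis provides planar drawings of the two loops with $\U^{N-1}\alpha,\D^{k-1}\U^{N-1}\alpha$ on the outer face of the first and $\omega,\D^{k-1}\omega$ on the outer face of the second. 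Draw the two disjointly in the plane; the two connecting edges of (c) can be routed through the unbounded region without crossings, there being only two of them and all four endpoints lying on the respective outer faces. Checking that the $(\alpha,\omega)$-boundary of the assembled graph --- the $\U$-boundary of the first loop, then $\omega$, then the $\D$-boundary of the second loop, then the remaining tail of the $\D$-boundary of the first loop --- again lands on the outer face closes the induction. I expect the main obstacle to be the foundational bookkeeping of the first two paragraphs: matching the abstractly defined loop $(\mu,\nu)$ with the concrete set $\{\sigma\in V:\sigma_N=\mp1\}$ and pinning down that only the two named transitions run between the loops; once that is settled, the intertwining of $\U,\D$, the isomorphisms of (a)--(b), and the planar assembly are routine.
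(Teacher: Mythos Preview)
Your proposal is correct and follows essentially the same approach as the paper: both hinge on the ``freezing'' observation that $\sigma_N=-1$ throughout the loop $(\alpha,\U^{N-1}\alpha)$ and $\sigma_N=+1$ (with the remaining non-$\rho_+$ spins also $+1$) throughout $(\D^{k-1}\omega,\omega)$, from which disjointness, the two connecting edges, and the projections to $\rho_-$ and $\rho_+$ all follow. Your treatment of (d) is more careful than the paper's one-line ``iterate (c)'': strengthening the inductive hypothesis to require the $(\alpha,\omega)$-boundary on the outer face is exactly what is needed to route the two connecting edges without crossings, and the paper leaves this implicit.
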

\begin{proof}
(a) This follows by noticing that the loop $(\alpha,\U^{N-1}\alpha)$ contains all elements obtained from $\alpha$ by $\U$- and $\D$-transitions which keep the $N$th spin equal to $-1$.

(b) Elements of the two loops are disjoint, since every element of $(\D^{k-1}U^{N-1}\alpha,\U^{N-1}\alpha)$ has the $N$th spin equal to $-1$, while every element of $(\D^{k-1}\omega,\omega)$ has the $N$th spin equal to $+1$. The bijection between the two loops is simply obtained by the flip of spin $N$.
Furthermore, for all elements $\sigma$ of $(\D^{k-1}\omega,\omega)$ we have that $\sigma_{\rho_k} = \sigma_{\rho_{k+1}} = \ldots = \sigma_{\rho_N} = 1$, where again $k$ is such that $\rho_k = N$. Thus the projection of $(\D^{k-1}\omega,\omega)$ to the spins $\sigma_{\rho_1},\ldots,\sigma_{\rho_{k-1}}$ equals to the Preisach graph of these $k-1$ spins and the associated permutation is $\rho_+$.

(c) The two loops are disjoint, since in the first $\sigma_N=-1$, while in the second one $\sigma_N=+1$. Furthermore, in order to go from the loop $(\alpha, \U^{N-1}\alpha)$ to the loop $(\D^{k-1}\omega,\omega)$, the $N$th spin has to be switched from $-1$ to $1$. Due to the definition of the $\U$-transition, this can happen only once all other spins have value $+1$, i.e., only from the state $\U^{N-1}\alpha=(1,\ldots,1,-1)$. Similarly, the transition for the $N$th spin from $1$ to $-1$ can occur only from the state $\D^{k-1}\omega$.

(d) This follows by iterating property (c).
\end{proof}

Property (c) allows us to decompose the Preisach graph of $N$ spins into two Preisach graphs $\pi_-$ and $\pi_+$, with the first one being in bijection with the system of $N-1$ spins generated by $\rho_-$, while the second graph is in bijection with the system of $k-1$ spins generated by $\rho_+$. The decomposition is illustrated in Figure~\ref{Fig1}(b).

Furthermore, by the $\ell$RPM property, this decomposition can be iteratively continued until all sup-loops consist of a single state. This leads to the following Lemma.
\begin{lem}
The Preisach graph $G=(V,E)$ coincides with the loop $(\alpha,\omega)$.
\end{lem}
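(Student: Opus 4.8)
The plan is to establish the two inclusions separately, writing $L$ for the loop $(\alpha,\omega)$ of Definition~\ref{def:general}(e), i.e.\ the iterative union of the boundary vertices of $(\alpha,\omega)$ together with those of every cycle obtained from it by repeatedly passing to major sub-cycles. Throughout I use Lemma~\ref{preisachabs}: \emph{every} cycle in $\Omega\times\Omega$ is absorbing, so that Definition~\ref{def:general}(c) applies to each cycle encountered and all ``major sub-cycles'' below are genuine cycles.

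The inclusion $L\subseteq V$ is the easy half. Call a cycle $(\mu,\nu)$ \emph{reachable} if $\mu\in V$; then $\nu=\U^n\mu\in V$ as well. The cycle $(\alpha,\omega)$ is reachable, and if $(\mu,\nu)$ is reachable so are all of its major sub-cycles: those of the form $(\mu,u)$ keep the lower endpoint $\mu\in V$, while those of the form $(v,\nu)$ have $v=\D^j\nu\in V$. Hence every cycle in the recursive family is reachable, and for such a cycle all boundary vertices $\U^k\mu$, $\D^j\nu$ lie in $V$ because $\mu,\nu\in V$ and $V$ is closed under $\U$ and $\D$. Thus $L\subseteq V$.

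For the reverse inclusion I show that $L$ contains $\alpha$ and is closed under $\U$ and $\D$; since $V$ is by definition the smallest such subset of $\Omega$, this gives $V\subseteq L$. That $\alpha\in L$ is clear, as $\alpha$ is the lower endpoint of $(\alpha,\omega)$. For closure under $\U$, let $\sigma\in L$ be a boundary vertex of a cycle $C=(\mu,\nu)$ in the recursive family. If $\sigma$ lies on the $\U$-boundary of $C$ with $\sigma\neq\nu$, then $\U\sigma$ is the successor of $\sigma$ on that boundary, so $\U\sigma\in L$. If $\sigma$ lies on the $\D$-boundary of $C$ with $\sigma\neq\nu$, then absorption of $C$ gives that the $\U$-orbit of $\sigma$ contains $\nu$, so $(\sigma,\nu)$ is a major sub-cycle of $C$ (hence again in the family) and $\U\sigma$ sits on its $\U$-boundary, so $\U\sigma\in L$. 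The only remaining possibility is $\sigma=\nu$: if $C=(\alpha,\omega)$ then $\U\sigma=\U\omega=\omega\in L$, and otherwise $C$ is a major sub-cycle of a parent $C'$ on whose boundary $\nu$ still lies, so we replace $C$ by $C'$ and repeat; this terminates at $(\alpha,\omega)$. Closure under $\D$ is the mirror image (using the absorption statement that the $\D$-orbit of a $\U$-boundary vertex contains $\mu$, or equivalently Remark~\ref{rem:UDsym}). The two inclusions give $V=L$.

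The main difficulty is the closure step: for each boundary vertex $\sigma$ of each cycle $C$ in the recursive family one must exhibit a cycle of that family whose boundary contains both $\sigma$ and $\U\sigma$ (resp.\ $\D\sigma$), and the endpoint cases, where the obvious choice fails, have to be pushed up to the parent cycle and ultimately absorbed at $(\alpha,\omega)$. A more computational alternative, indicated in the paragraph preceding the statement, is induction on $N$: by Lemma~\ref{Lemmaproperties}(a)--(c) the graph $G$ decomposes as the union of the loops $(\alpha,\U^{N-1}\alpha)$ and $(\D^{k-1}\omega,\omega)$ joined by two edges, each being a Preisach graph on fewer letters, and one checks that this matches the first level of the major-sub-cycle decomposition of $(\alpha,\omega)$, so the two constructions proceed in lock-step. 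I prefer the closure argument, which sidesteps carrying the isomorphisms of Lemma~\ref{Lemmaproperties} through the induction.
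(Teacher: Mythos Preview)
Your proof is correct and follows essentially the same route as the paper's: both show $L\subseteq V$ easily and then use the $\ell$RPM/absorption property to argue that every state reached from $\alpha$ by a sequence of $\U$'s and $\D$'s remains a boundary vertex of some sub-cycle of $(\alpha,\omega)$ --- the paper phrases this as an induction along the path, while you phrase the identical step as closure of $L$ under $\U$ and $\D$. Your write-up is considerably more explicit about the case analysis (in particular the endpoint case requiring passage to the parent cycle), which the paper leaves to the reader.
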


\begin{proof}
 From Lemma \ref{Lemmaproperties} it follows immediately that the loop $(\alpha, \omega)$ must be a subgraph of $G$. For the converse statement, consider any $\sigma \in V$ so
 that
 \begin{equation}
  \sigma = (\D\textrm{ or }\U)^{n_m}\cdots\D^{n_4}\U^{n_3}\D^{n_2}\U^{n_1}\alpha
 \end{equation}
 for some $n_1, n_2, \ldots n_m$. The sequence of transitions given above describes a path from $\alpha$ to $\sigma$. Using the $\ell$RPM property of $(\alpha, \omega)$, and proceeding inductively, one obtains that each of the states visited on this path must be a $\U$- or $\D$-boundary state of some sub-cycle of $(\alpha, \omega)$.
\end{proof}

\subsection{Construction of the Preisach graph}\label{sectAlgorithm}
The properties obtained above permit an iterative construction of the Preisach graph associated with the permutation $\rho=(\rho_1,\ldots,\rho_N)$. Let us define a sequence of permutations $\rho^{(n)}$ of $\{1,\ldots,n\}$, $n=1,\ldots,N$ by selecting from $\rho$ the entries belonging to $\{1,\ldots,n\}$ only. By Lemma~\ref{Lemmaproperties} (c), the graph for $\rho^{(n+1)}$ is obtained from its loop $\pi_-^{(n+1)}$ and $\pi_+^{(n+1)}$ together with two directed edges that join them. Moreover, the graph of $\pi_-^{(n+1)}$ is isomorphic to the graph of $\rho^{(n)}$, while  $\pi_+^{(n+1)}$ is isomorphic to a certain sub-loop of  $\rho^{(n)}$ as explained in the algorithm below.

Thus, the graph at step $n+1$ associated with $\rho^{(n+1)}$ can be constructed by the following algorithm.

\newpage
\begin{algo}[Forward algorithm]
The forward algorithm to construct the Preisach graph of a permutation $\rho$ is the following:
\begin{enumerate}
\item Let $k$ be defined by $\rho^{(n+1)}_{k}=n+1$.
\item Start with the graph for $\rho^{(n)}$, which is the loop $(\alpha,\U^{n}\alpha)$.
\item Create a copy of the loop $(\D^{k-1}\U^{n}\alpha,\U^n\alpha)$ and call its end-points $\D^{k-1}\U^{n+1}\alpha$ and $\U^{n+1}\alpha$ respectively.
\item Add the directed edge from $\U^n\alpha$ to $\U^{n+1}\alpha$, and from $\D^{k-1}\U^{n+1}\alpha$ to $\D^{k-1}\U^n\alpha$.
\end{enumerate}
\end{algo}

\begin{rem}\label{rem:UDsymB}
Using the $\U-\D$ symmetry mentioned already in Remark~\ref{rem:UDsym}, an alternative algorithm to construct the Preisach graph consists in considering the sequence of permutations $\tilde\rho^{(n)}=(\rho_1,\ldots,\rho_n)$, for $n=1,\ldots,N$. At step $n$ we attach a
copy of a subloop of the current graph at its lower endpoint to obtain the graph at step $n+1$. See Figure~\ref{Fig3} for an illustration for the permutation $\rho=(2,4,3,1)$.
\end{rem}

\begin{figure}[h!]
\begin{center}
\includegraphics[width=\textwidth]{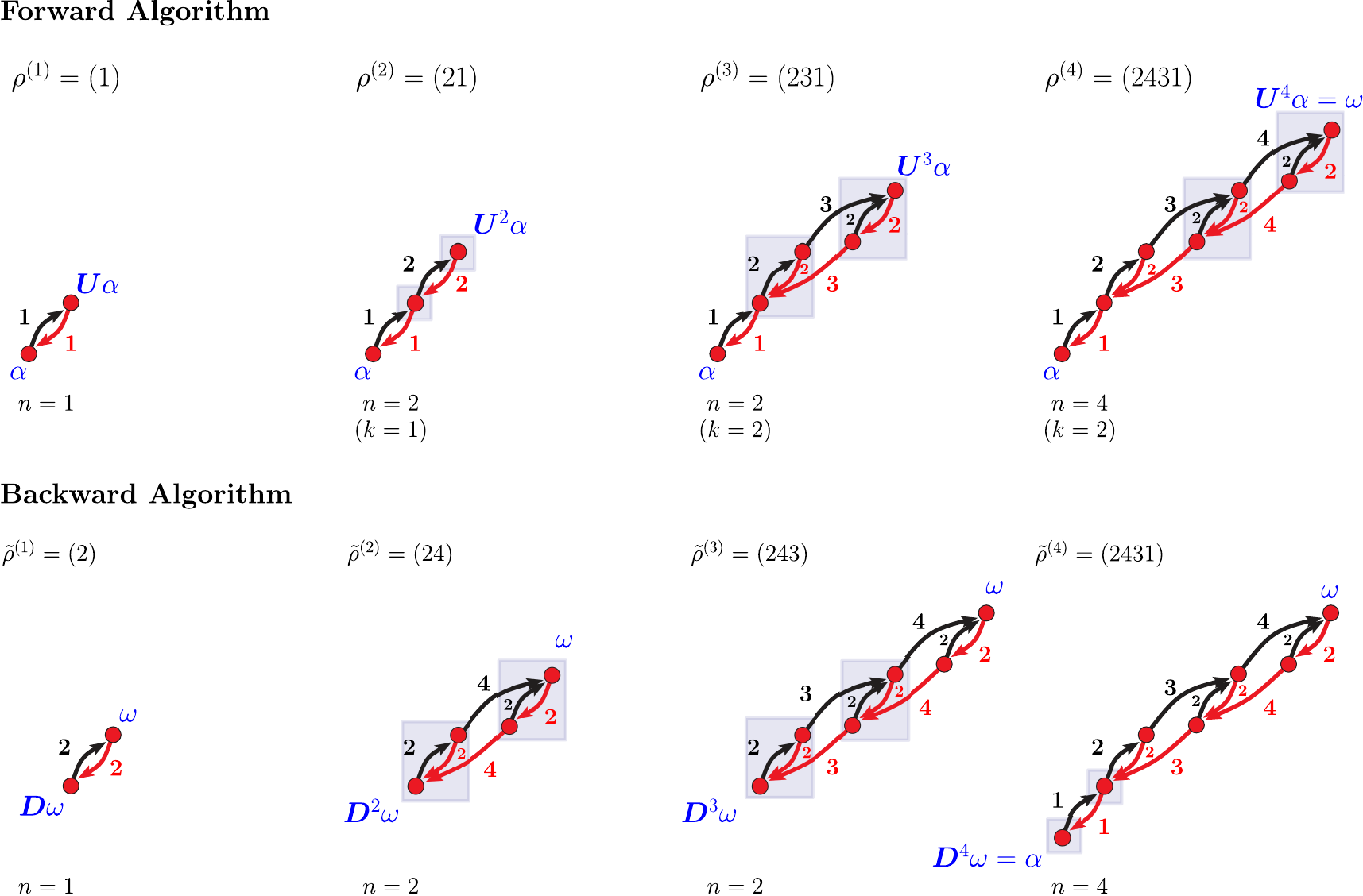}
\caption{Illustration of forward and backward algorithms for $\rho=(2,4,3,1)$. The copied loops are highlighted in gray and retain the labeling of their edges. }
\label{Fig3}
\end{center}
\end{figure}

\newpage
\section{Proof of the results}

\paragraph{Labeling of the transitions.}
To define the bijection between vertices of the Preisach graph and increasing subsequences, we first add labels to each edge of the graph. The label indicates which spin is changing its sign during the
corresponding transition, as determined by $i^\pm$ in \eqref{eqn:ipmdef} and \eqref{eqn:UDdef}. Alternatively, this labeling can be carried out during the algorithmic construction of the Preisach graph described in Section~\ref{sectAlgorithm}: the directed edges between the loops $\pi_-^{(n+1)}$ and $\pi_+^{(n+1)}$ get the label $n+1$, while the labels of $\pi_\pm^{(n+1)}$ are simply inherited by the ones of the graph at step $n$. That is, the edges which were already present are unchanged and the new loop $\pi_+^{(n+1)}$, which is a copy of loop of the graph at step $n$, inherits the edge labels (see the top row of Figure~\ref{Fig3} for an example).

\paragraph{Shortest path to a vertex.}
\begin{prop}
For any vertex $\sigma\in V$, there exists a \emph{unique} shortest path from $\alpha$ to $\sigma$.
\end{prop}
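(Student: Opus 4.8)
The plan is a strong induction on $N$. The base case $N\le 1$ is immediate, since then $V\subseteq\{\alpha,\omega\}$ and $G$ has at most one edge. For the inductive step let $k$ be defined by $\rho_k=N$ and use the decomposition of Lemma~\ref{Lemmaproperties}(c): $G=\pi_-\cup\pi_+$, where $\pi_-$ is the loop $(\alpha,\U^{N-1}\alpha)$, $\pi_+$ is the loop $(\D^{k-1}\omega,\omega)$, and the two loops are joined by exactly two edges, $e_1$ from $\U^{N-1}\alpha$ to $\omega$ (the unique edge from $\pi_-$ into $\pi_+$) and $e_2$ from $\D^{k-1}\omega$ to $\D^{k-1}\U^{N-1}\alpha$ (the unique edge from $\pi_+$ into $\pi_-$). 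By Lemma~\ref{Lemmaproperties}(a),(b), $\pi_-$ is graph-isomorphic to the Preisach graph of $\rho_-\in{\cal S}_{N-1}$, with $\alpha\mapsto\alpha$ and $\U^{N-1}\alpha\mapsto\omega$, and $\pi_+$ is graph-isomorphic to the Preisach graph of $\rho_+\in{\cal S}_{k-1}$, with $\D^{k-1}\omega\mapsto\alpha$ and $\omega\mapsto\omega$.

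The first step is a shortcut lemma: if a walk traverses $e_1$ and afterwards traverses $e_2$, it is not a shortest walk between its endpoints. Indeed, the portion of such a walk from the tail $\U^{N-1}\alpha$ of $e_1$ to the head $\D^{k-1}\U^{N-1}\alpha$ of $e_2$ has length at least $2+d_{\pi_+}(\omega,\D^{k-1}\omega)\ge 2+(k-1)$, whereas $\D^{k-1}\U^{N-1}\alpha$ is reached from $\U^{N-1}\alpha$ by the path of $k-1$ consecutive $\D$-steps lying inside $\pi_-$; replacing that portion strictly shortens the walk (and a shortest walk, having no repeated vertex, is a path). Consequently a shortest path from $\alpha$ to $\sigma$ lies entirely in $\pi_-$ when $\sigma\in\pi_-$, and when $\sigma\in\pi_+$ it crosses $e_1$ exactly once and never crosses $e_2$, so it has the form $P_1\cdot e_1\cdot P_2$ with $P_1$ a path in $\pi_-$ from $\alpha$ to $\U^{N-1}\alpha$ and $P_2$ a path in $\pi_+$ from $\omega$ to $\sigma$. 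Both $|P_1|$ and $|P_2|$ are then forced to be minimal (otherwise one could splice in a shorter sub-path), so $P_1$ is a shortest $\alpha\to\U^{N-1}\alpha$ path inside $\pi_-$ and $P_2$ is a shortest $\omega\to\sigma$ path inside $\pi_+$ (for $P_2$, note that $P_2\subseteq\pi_+$ already forces its length to equal the distance computed within $\pi_+$).

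Uniqueness now follows by applying the induction hypothesis inside $\pi_-$ and $\pi_+$. For $\sigma\in\pi_-$, and for the piece $P_1$ (a shortest $\alpha\to\U^{N-1}\alpha$ path in $\pi_-$), the isomorphism of Lemma~\ref{Lemmaproperties}(a) identifies these with shortest paths starting at $\alpha$ in the Preisach graph of $\rho_-\in{\cal S}_{N-1}$, which are unique by induction. For the piece $P_2$, the isomorphism of Lemma~\ref{Lemmaproperties}(b) identifies it with a shortest path starting at $\omega$ — the upper endpoint, not the base point — in the Preisach graph of $\rho_+$. Here the induction hypothesis does not apply verbatim, and one invokes the $\U$-$\D$ symmetry of Remark~\ref{rem:UDsym}: swapping $\U$- and $\D$-transitions carries the Preisach graph of $\rho_+$ to that of $(\rho_+)^{-1}$ by a graph isomorphism that necessarily exchanges the $\D$-fixed point $\alpha$ and the $\U$-fixed point $\omega$. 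Hence a shortest $\omega$-based path in the Preisach graph of $\rho_+$ corresponds to a shortest $\alpha$-based path in the Preisach graph of $(\rho_+)^{-1}\in{\cal S}_{k-1}$, unique by induction. Reassembling, the shortest $\alpha\to\sigma$ path — either the one inside $\pi_-$, or $P_1\cdot e_1\cdot P_2$ — is unique.

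I expect the main obstacle to be the shortcut lemma and the structural reduction it supports: one must check that a shortest walk crosses between $\pi_-$ and $\pi_+$ in precisely the controlled way described, and that the ``last crossing of $e_1$'' decomposition $P_1\cdot e_1\cdot P_2$ holds for every shortest path rather than just one, so that uniqueness of $P_1$ and $P_2$ transfers to uniqueness of the whole. The other slightly delicate point is the reduction of ``shortest path from the top endpoint $\omega$'' to ``shortest path from $\alpha$ for the inverse permutation'' via Remark~\ref{rem:UDsym}, which is what lets the induction close on the one-sided statement actually asserted; after that the argument is routine bookkeeping with the isomorphisms of Lemma~\ref{Lemmaproperties}.
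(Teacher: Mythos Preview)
Your proof is correct and takes a genuinely different route from the paper's. The paper argues directly along the $\U$-orbit of $\alpha$: it supposes two shortest paths split at some $\U^{n-1}\alpha$ and, using Lemma~\ref{lem1} together with the isomorphism of the two loops $(\D^{k-1}\U^{n-1}\alpha,\U^{n-1}\alpha)$ and $(\D^{k-1}\U^{n}\alpha,\U^{n}\alpha)$, derives a contradiction. This pins down a unique first switch-back point $s_1=\U^{n_1}\alpha$, after which the argument is iterated (via the $\U$-$\D$ symmetry) with $s_1$ replacing $\alpha$. By contrast, you run a strong induction on $N$ through the global decomposition $G=\pi_-\cup\pi_+$ of Lemma~\ref{Lemmaproperties}(c): your shortcut lemma (bounding the detour through $\pi_+$ by the Hamming-distance estimate $d_{\pi_+}(\omega,\D^{k-1}\omega)\ge k-1$) forces every shortest path to stay in $\pi_-$ or to factor as $P_1\cdot e_1\cdot P_2$, and uniqueness is inherited from the smaller Preisach graphs $G(\rho_-)$ and $G((\rho_+)^{-1})$.

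The trade-off is this. Your argument is cleaner and more modular --- it uses the recursive structure of Lemma~\ref{Lemmaproperties} exactly once per level of the induction --- but it yields uniqueness abstractly. The paper's proof is messier, yet in the course of showing uniqueness it \emph{constructs} the shortest path as an alternating sequence of switch-back states $s_1,s_2,\dots$, which is precisely the description needed immediately afterwards in Definition~\ref{DefShortest} to define the bijection $\Phi$. If you go on to prove Theorem~\ref{thmMain}, you will have to recover that switch-back description separately (though it drops out easily from your decomposition: $P_1$ is forced to be the $\U$-orbit to $\U^{N-1}\alpha$, $e_1$ contributes the label $\ell_1=N$, and the recursion into $\pi_+$ supplies the remaining labels).
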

\begin{proof}
First of all, consider only paths from $\alpha$ to $\sigma$ which do not use twice the same edge, since otherwise they would not be shortest. Assume that the shortest path is not unique and let $\gamma_1$ and $\gamma_2$ be two such paths.

Let us show first that the shortest paths cannot split on the $\U$-orbit of $\alpha$, i.e., that they must coincide on this orbit. Assume that they split at the vertex $\U^{n-1}\alpha$, with the path $\gamma_1$ going through the edge from $\U^{n-1} \alpha$ to $\D\U^{n-1}\alpha$, while the path $\gamma_2$ passes through the edge from $\U^{n-1}\alpha$ to $\U^n\alpha$. Let $k$ be the index such that $\rho_k=n$. By Lemma~\ref{lem1} and Lemma~\ref{Lemmaproperties}(a), it follows that $\D^{k-1}\U^{n-1}\alpha=\D^{k}\U^{n}\alpha$. Then, the end-point $\sigma$ cannot belong to the loop $(\D^{k-1}\U^n\alpha,\U^n\alpha)$, since these vertices cannot be reached by the path $\gamma_1$. Indeed, for such a path to reach this loop it needs to pass a second time by $\U^{n-1}\alpha$, which would imply that it is not a shortest path. Also, $\sigma$ cannot be in the loop $(\D^{k-1}\U^{n-1}\alpha,\U^{n-1}\alpha)$, since the path $\gamma_2$ to reach one of these points needs to pass by the vertex $\D^{k-1}\U^{n-1}\alpha$. But the loops $(\D^{k-1}\U^{n-1}\alpha,\U^{n-1}\alpha)$ and $(\D^{k-1}\U^n\alpha,\U^n\alpha)$ being isomorphic, for each path $\gamma_2$ there would exists a path $\tilde{\gamma}_2$ of length decreased by two, which contradicts the assumption that $\gamma_2$ is a shortest path.

Consequently, there is a unique point on the $\U$-orbit of $\alpha$ where the shortest path to $\sigma$ switches from a $\U$ to a $\D$ orbit, which we denote by $s_1=\U^{n_1}\alpha$. We call such states {\em switch-back} states. Also note that the path $\gamma_1$ passes exactly once by an edge with label $n_1$ and it never uses edges of higher label, as we will prove below. Next one repeats the same argument with the initial point being $s_1$ and the roles of $\U$ and $\D$ interchanged.
By the $\U$-$\D$ symmetry, this leads to a unique point on the $\D$-orbit of $s_1$ where it switches from a $\D$ to a $\U$ orbit. We denote this vertex by $s_2=\D^{n_2} s_1$ and clearly $n_2<n_1$. Repeating the first argument with $\alpha$ replaced by $s_2$ we get a vertex $s_3=\U^{n_3} s_2$ with $n_3<n_2$, and so on, until we reach the end-point $\sigma$.
\end{proof}

\paragraph{The explicit bijection.}
Given the uniqueness of the shortest path, this path can be described as follows.
\begin{defin}\label{DefShortest}
Let $\gamma$ be the shortest path from $\alpha$ to $\sigma$. We call switch-back states for $\sigma$ the vertices where the shortest path switches from a $\U$ to a $\D$ transition or vice versa.
We will regard the destination state $\sigma$ as a switch-back state as well. Denote by $\ell_1,\ldots,\ell_m$ the labels of the edges on the path $\gamma$ ending at the switch-back states, ordered according to their appearance on $\gamma$.
\end{defin}
Define the map $\Phi:E\to {\cal I}_\rho$ by
\begin{equation}\label{eqBijection}
\Phi(\sigma)=(\ell_m,\ldots,\ell_2,\ell_1),
\end{equation}
see Figure~\ref{FigShortestPath} for an illustration of the bijection. We are now ready to prove the main result.

\begin{figure}
\begin{center}
\includegraphics[width=\textwidth]{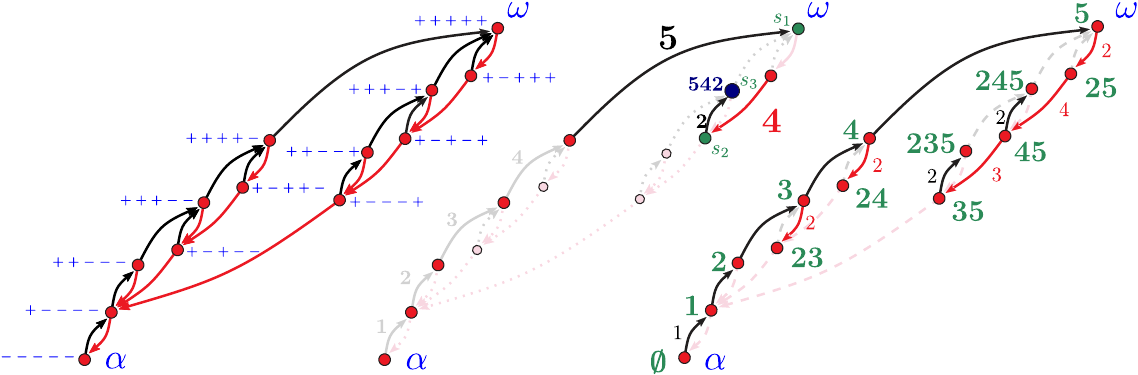}
\caption{From left to right: the Preisach graph generated by the the permutation $\rho=(2,4,3,5,1)$,  the shortest path to the vertex associated with the increasing subsequence $(2,4,5)$  of $\rho$, and the bijection between vertices and increasing subsequences.}
\label{FigShortestPath}
\end{center}
\end{figure}

\begin{proof}[Proof of Theorem~\ref{thmMain}]
Consider a shortest path $\gamma$ from $\alpha$ to $\sigma$, so that there exists some $m$ and numbers $n_1,\ldots,n_m$ such that
\begin{equation}\label{eq3}
\sigma=(\D\textrm{ or }\U)^{n_m}\cdots\D^{n_4}\U^{n_3}\D^{n_2}\U^{n_1}\alpha.
\end{equation}
The condition that $\gamma$ is the shortest path gives $m={\cal N}(\sigma)$. What remains to be shown is that $\Phi$ is a bijection.

\emph{Step 1: First switch-back after $n_1$ $\U$-transitions.} Set $k_1$ such that $\rho_{k_1}=n_1$. By Lemma~\ref{Lemmaproperties}(a) applied repeatedly, $(\alpha,\U^{n_1}\alpha)$ is the graph for the permutation $\tilde\rho$ obtained by keeping only the entries of $\rho$ with $\rho_i\leq n_1$. Thus, every edge in $(\alpha,\U^{n_1}\alpha)$ has a label in $\{1,\ldots,n_1\}$. In particular, by Lemma~\ref{lem1}, we have
\begin{equation}\label{eq2}
\D^{k_1}\U^{n_1}\alpha=\D^{k_1-1}\U^{n_1-1}\alpha.
\end{equation}
Thus we have $\ell_1=n_1$ and every $\sigma$ with $\ell_1=n_1$ will correspond to an increasing subsequence with largest entry $\ell_1$ and vice versa.

\emph{Step 2: Second switch-back after $n_2$ $\D$-transitions.} We have $n_2<k_1$, since otherwise there would be a shorter path leading to $\sigma$, due to \eqref{eq2}. Thus,
\begin{equation}
\sigma\in (\D^{k_1-1}\U^{n_1}\alpha,\U^{n_1}\alpha).
\end{equation}
The labels in this loop are all strictly less than $n_1$: by Lemma~\ref{Lemmaproperties}(b) this loop is isomorphic to the loop $(\D^{k_1-1}\U^{n_1-1}\alpha,\U^{n_1-1}\alpha)$, which has labels in $\{1,\ldots,n_1\}$, and by the construction explained above, the edge labels are also the same. Consequently, the second label is $\ell_2=k_2=\tilde\rho_{n_2}<\ell_1$.

Furthermore, let $k_2=\tilde\rho_{n_2}$. Then,
\begin{equation}
\sigma \in (\D^{n_2}\U^{n_1}\alpha,\U^{k_2-1}\D^{n_2}\U^{n_1}\alpha).
\end{equation}
Indeed, if the path $\gamma$ would go through $\U^{k_2}\D^{n_2}\U^{n_1}\alpha$, then it would not be a shortest one, since by Lemma~\ref{lem2}, $\U^{k_2}\D^{n_2}\U^{n_1}\alpha=\U^{k_2-1}\D^{n_2-1}\U^{n_1}\alpha$.

Every $\sigma$ with $\ell_1=n_1$ and $\ell_2=\tilde\rho_{n_2}$ will correspond to an increasing subsequence with the last two entries $\ell_2,\ell_1$, and vice versa, every increasing subsequence with the last two entries $\ell_2,\ell_1$ corresponds to the choice $n_1=\ell_1$ and $n_2$ such that $\ell_2=\tilde\rho_{n_2}$.

\emph{Further iterations.} Notice that the loop $(\D^{n_2}\U^{n_1}\alpha,\U^{k_2-1}\D^{n_2}\U^{n_1}\alpha)$ is isomorphic to the loop $(\alpha,\U^{k_2-1}\alpha)$. Moreover, by the construction presented above, these loops  have the same labeling of edges (see also the Forward Algorithm in Figure \ref{Fig3} for an illustration). Repeating \emph{Step 1} and \emph{Step 2}, we obtain that for each $\sigma$ there is a different labeling $\ell_1,\ldots,\ell_m$, and by reading from right to left, it is an increasing subsequence of $\rho$. Conversely, for each increasing subsequence there is exactly one choice of $n_1,\ldots,n_m$ such that \eqref{eq3} hold.
\end{proof}

\begin{rem}
In~\cite{Ter20} it was already proven that the cardinality of the set of increasing subsequences contained in $\rho$ equals to the number of vertices of the corresponding Preisach graph. This set being finite, it would have been enough to show that to each $\sigma$ there corresponds a different increasing subsequence.
\end{rem}


\end{document}